\newcommand{\crefnames}[3]{%
	\@for\next:=#1\do{%
		\expandafter\crefname\expandafter{\next}{#2}{#3}%
	}%
}
\setlist[itemize]{wide = 0pt, labelwidth = 2em, labelsep*=0em, itemindent = 0pt, leftmargin = \dimexpr\labelwidth + \labelsep\relax, noitemsep,topsep = 1ex,}
\setlist[enumerate]{wide = 0pt, labelwidth = 2em, labelsep*=0em, itemindent = 0pt, leftmargin = \dimexpr\labelwidth + \labelsep\relax, noitemsep,topsep = 1ex}
\theoremstyle{plain}
\newtheorem{thmx}{Theorem}
\renewcommand{\thethmx}{\Alph{thmx}} 
\newtheorem{theorem}{Theorem}[section]  
\newtheorem{lemma}[theorem]{Lemma}
\newtheorem{proposition}[theorem]{Proposition}
\theoremstyle{definition}
\theoremstyle{remark}
\newtheorem{remark}[theorem]{Remark}
\numberwithin{equation}{section}  
\theoremstyle{plain}
\newlist{thmlist}{enumerate}{1}
\setlist[thmlist]{wide = 0pt, labelwidth = 2em, labelsep*=0em, itemindent = 0pt, leftmargin = \dimexpr\labelwidth + \labelsep\relax, noitemsep,topsep = 1ex, font=\normalfont, label=(\roman*), ref=\thetheorem.(\roman{thmlisti})}
\newlist{thmenum}{enumerate}{1} 
\setlist[thmenum]{wide = 0pt, labelwidth = 2em, labelsep*=0em, itemindent = 0pt, leftmargin = \dimexpr\labelwidth + \labelsep\relax, noitemsep,topsep = 1ex, font=\normalfont, label=(\roman*), ref=\thethmx.(\roman{thmenumi})}
\newlist{corlist}{enumerate}{1} 
\setlist[corlist]{wide = 0pt, labelwidth = 2em, labelsep*=0em, itemindent = 0pt, leftmargin = \dimexpr\labelwidth + \labelsep\relax, noitemsep,topsep = 1ex, font=\normalfont, label=(\roman*), ref=\thecorx.(\roman{corlisti})}
\crefname{lemma}{Lemma}{Lemmas} 
\crefname{conjecture}{Conjecture}{Conjectures}
\crefname{theorem}{Theorem}{Theorems}
\crefname{proposition}{Proposition}{Propositions}
\crefname{definition}{Definition}{Definitions}
\crefname{remark}{Remark}{Remarks}
\crefname{corollary}{Corollary}{Corollaries}
\crefname{corx}{Corollary}{Corollaries}
\crefname{problem}{Problem}{Problems}
\crefname{thmx}{Theorem}{Theorems}
\crefname{claim}{Claim}{Claims}
\crefname{assumption}{Assumption}{Assumptions}
\crefname{main}{Main Theorem}{Main Theorems}
\newcommand{\C}{\mathbb{C}}
\newcommand{\R}{\mathbb{R}}
\newcommand{\Z}{\mathbb{Z}}
\newcommand{\norm}[1]{\left\lVert#1\right\rVert}
\newcommand{\pbar}{\overline{\partial}}
\begin{document}

\title[The holomorphic convexity of nilpotent coverings]{On the holomorphic convexity of nilpotent coverings over compact K\"ahler surfaces}

{
\author{Yuan Liu}
\email{yuanliu@ustc.edu.cn}
\address{Institute of Geometry and Physics, University of Science and Technology of China, Hefei, China.} 
\urladdr{https://sites.google.com/view/yuan-lius-website} 
}

\begin{abstract}
We prove that any nilpotent regular covering over a compact K\"ahler surface is holomorphically convex if it does not have two ends. Furthermore, we show that the Malcev covering of any compact K\"ahler manifold has at most one end.
\end{abstract}
\maketitle

\section{Introduction}
The Shafarevich conjecture asks if the universal covering of a compact K\"ahler manifold is holomorphically convex. Not only focusing on the universal coverings, we can also ask the same question for intermediate coverings, and find natural conditions under which these intermediate coverings are holomorphically convex. This kind of question was answered partially for the Malcev covering over projective manifolds by Katzarkov (c.f. \cite{Katzarkov97}), for reductive coverings without two ends over projective surfaces by Katzarkov and Ramachandran (c.f. \cite{KatzarkovRamachandran98}), for the Malcev covering over compact K\"ahler manifolds by Leroy (c.f. \cite{LeroyThesis}, explained by Claudon in \cite{Claudon}), and for reductive coverings without two ends over compact K\"ahler surfaces by the author (c.f. \cite{Yuan23}). This is a continuation about considering the virtually nilpotent coverings over a compact K\"ahler surface and we study when they are holomorphically convex. 

Recall that for any regular covering over a compact K\"ahler manifold, we call it a $\mathcal{P}$-covering if the Galois group of this regular covering has the property $\mathcal{P}$. The property $\mathcal{P}$ could be infinite, reductive, nilpotent, linear, etc. All the coverings considered afterward will be regular without explicitly being mentioned. Also,  we say a group is virtually $\mathcal{P}$ if it admits a normal subgroup of finite index satisfying the property $\mathcal{P}$. Notice that for holomorphic convexity, we only need to consider the infinite coverings, and a virtually $\mathcal{P}$-covering is holomorphically convex if and only if the corresponding $\mathcal{P}$-covering is.

We focus on the case of surfaces due to technical limitations (see \cref{rmk:why_two_dim}), and our first main result is:
\begin{thmx}[\cref{thm:nilpotent_covering}]
Let $\hat{X}$ be an infinite virtually nilpotent covering of a compact K\"ahler surface and it does not have two ends, then $\hat{X}$ is holomorphically convex.
\end{thmx}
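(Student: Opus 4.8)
The plan is to argue by induction on the Hirsch length of the nilpotent Galois group, reducing the general nilpotent case to the abelian (hence reductive) case treated in \cite{Yuan23} and to a sequence of central infinite-cyclic extensions. First I would carry out the standard reductions. Since holomorphic convexity is insensitive to passing to a finite-index subgroup (as recalled in the introduction), I may assume the Galois group $N$ of $\hat X \to X$ is a finitely generated torsion-free nilpotent group. Because $N$ is nilpotent and infinite its abelianization $N^{\mathrm{ab}}$ is infinite; and because $\hat X$ does not have two ends, $N$ is not virtually $\Z$, so (a finitely generated infinite nilpotent group having one or two ends, with two exactly when it is virtually $\Z$) the free rank $r = \mathrm{rk}\, N^{\mathrm{ab}}$ satisfies $r \ge 2$ and $N$ has one end. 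In particular $b_1(X) > 0$, so the Albanese map $\alpha \colon X \to A = \mathrm{Alb}(X)$ is nonconstant, and the universal free abelian covering is the pullback along $\alpha$ of a covering of $A$; this Albanese geometry provides the concrete model for the reduction below.

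For the induction, quotienting $N$ by a nontrivial subgroup $Z$ of the last nonzero term of its lower central series (which is central, and which after a further finite-index adjustment I may take so that $N/Z$ is torsion-free) yields a nilpotent group $\bar N = N/Z$ of strictly smaller Hirsch length with $\bar N^{\mathrm{ab}} = N^{\mathrm{ab}}$, so $\mathrm{rk}\,\bar N^{\mathrm{ab}} = r \ge 2$ and $\bar N$ again has one end. Let $\bar X \to X$ be the corresponding intermediate covering, so that $\hat X \to \bar X$ is a central covering with group $Z$. The base case $N = \Z^{r}$ with $r \ge 2$ is a reductive covering — it is the covering attached to the diagonal representation $\pi_1(X) \to N \hookrightarrow (\C^{*})^{r}$ — which has one end, so \cite{Yuan23} applies and $\hat X$ is holomorphically convex, with Cartan--Remmert reduction realized through the Albanese torus. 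By induction $\bar X$ is holomorphically convex, and, iterating one central $\Z$ at a time, the theorem reduces to a single statement: a central infinite-cyclic covering $\hat X \to \bar X$ inside the tower over the compact base $X$, with $\bar X$ holomorphically convex and $\hat X$ one-ended, is holomorphically convex.

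This last step is where I expect the real difficulty to lie, since infinite-cyclic coverings are precisely the ones that can fail to be holomorphically convex, the failure being detected by two ends. The cyclic covering $\hat X \to \bar X$ is classified by a class $\phi \in H^1(\bar X, \Z)$ restricting the central $\Z \subset N$; the danger is that $\phi$ is ``vertical'' for the reduction $\rho \colon \bar X \to \bar S$ to the Stein base, producing a tube-like two-ended end on which holomorphic functions are bounded. Here I would invoke the Gromov--Napier--Ramachandran dichotomy for coverings of compact K\"ahler manifolds: if $\hat X$ had more than one end, then $X$ would carry a surjective holomorphic map onto a Riemann surface pulling $\hat X$ back to a covering that is disconnected at infinity, forcing at least two ends and contradicting the one-end hypothesis — which is itself guaranteed throughout the tower because every intermediate group has free abelianization of rank $r \ge 2$ (and, at the level of the full Malcev covering, by our second main result that the Malcev covering has at most one end). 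With the vertical and two-ended alternative excluded, the class $\phi$ descends through the reduction and pairs nontrivially with a holomorphic form pulled back from $A$, so the Stein factorization of the resulting map on $\hat X$ is proper onto a Stein space; this exhibits $\hat X$ as holomorphically convex and closes the induction. The main obstacle is thus controlling the cyclic step: translating the purely topological one-endedness into analytic properness of the reduction, for which the K\"ahler structure and the Albanese model of the reduction are essential.
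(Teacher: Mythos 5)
Your reduction collapses at exactly the step you flag as the hard one, and the mechanism you propose there cannot work. After quotienting by a central $Z\cong\Z$ taken inside the last nonzero term of the lower central series, $Z$ lies in the commutator subgroup $[N,N]$ (this is automatic for every non-abelian step of your induction). The class $\phi\in\mathrm{Hom}(\pi_1(\bar X),\Z)$ classifying $\hat X\to\bar X$ therefore takes nonzero values on lifts of commutator loops of $X$ (e.g.\ on $[a,b]$ when $N$ is the Heisenberg group with center generated by $[a,b]$), whereas every closed $1$-form pulled back from $X$ or from $\mathrm{Alb}(X)$ has vanishing periods on commutators. So $\phi$ can never ``pair nontrivially with a holomorphic form pulled back from $A$'': the central direction is invisible to $H^1(X)$ and to the ordinary Albanese, which is precisely why the paper replaces $\mathrm{Alb}(X)$ by Hain's higher Albanese manifolds and obtains the pluriharmonic functions detecting the central coordinates from Malcev's coordinates of the second kind (iterated-integral data), not from closed $1$-forms. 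A second, independent problem is that your cyclic step is a $\Z$-covering of the \emph{noncompact} manifold $\bar X$: the Napier--Ramachandran/Delzant/BNS machinery you invoke is about coverings of compact K\"ahler manifolds, and there is no general theorem that an infinite cyclic covering of a holomorphically convex noncompact manifold is again holomorphically convex --- indeed the failure of holomorphic convexity to propagate up towers of coverings is the whole difficulty of the Shafarevich problem. Your appeal to the one-end dichotomy only re-excludes a case already excluded by hypothesis and does not supply the properness you need. Finally, nothing in your argument produces a plurisubharmonic exhaustion of $\hat X$, which is the engine of the actual proof.

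For contrast: the paper's proof factors $\rho$ through the torsion-free nilpotent quotient $\Gamma^s$, lifts the higher Albanese map $\alpha_s\colon X\to A_s$ to a proper map $f\colon X_\rho\to M_\rho$ of $\rho$-coverings, composes with the second-kind coordinates $t_j$ of the type~II generators to get pluriharmonic $f_j$, shows $g=\sum_{j\in J}f_j^2$ is a real-analytic plurisubharmonic exhaustion (using Malcev's compactness lemma for bounded second-kind coordinates), and then runs the same Levi-form degeneracy dichotomy as in the abelian case (Narasimhan's theorem versus Levi-flat level sets and Napier--Ramachandran), which is where the surface hypothesis enters. If you want to keep an inductive flavor, the induction must happen at the level of the higher Albanese tower \eqref{diag:higher_alb}, where the new coordinates at each stage are supplied by the construction; it cannot be run through central cyclic coverings of noncompact intermediate spaces.
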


Being nilpotent is the next algebraic complication after being abelian and the proof of the above theorem is inspired by that in the abelian case (\cref{prop:abelian_cover}). The condition that $\hat{X}$ does not have two ends might be weird at first glance, and we add an explanation of this in \cref{subsec:two_ends}. A criterion (\cref{thm:two_ends_BNS}) is given for the holomorphic convexity of two-ended coverings, which is not new and should be seen as a different sight of a theorem of Napier and Ramachandran using the BNS-invariant.

We also prove a result on the number of ends for the Malcev covering of a compact K\"ahler manifold.

\begin{thmx}[\cref{thm:one_end_malcev}]
The number of ends for the Malcev covering of a compact K\"ahler manifold is at most one.    
\end{thmx}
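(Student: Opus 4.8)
The plan is to reduce the statement to a question about the deck group and then treat the two‑ended and the infinitely‑ended cases separately. Write $\Gamma=\pi_1(X)$ and let $G$ be the Galois group of the Malcev covering $\hat X$. By construction $G$ is the image of $\Gamma$ in its Malcev completion, hence a finitely generated, torsion-free, residually nilpotent group whose free abelianization has rank equal to $b_1(X)$. Since $X$ is compact, $\hat X$ is quasi-isometric to $G$ by the \v{S}varc--Milnor lemma, so the number of ends of $\hat X$ equals the number of ends $e(G)$ of $G$. If $G$ is finite the covering is finite and there is nothing to prove, so I assume $G$ is infinite and, for contradiction, that $e(G)\ge 2$.

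Because $G$ is torsion-free, Stallings' theorem forces a splitting over the trivial subgroup: either $e(G)=2$ and $G\cong\Z$, or $e(G)=\infty$ and $G$ is a nontrivial free product $G\cong A\ast B$. The two-ended case is excluded at once: if $G\cong\Z$ then $b_1(X)=\operatorname{rank}H_1(G;\Z)=1$, contradicting the fact that the first Betti number of a compact K\"ahler manifold is even. It therefore remains to rule out $e(G)=\infty$. Here the parity of $b_1$ is not enough, since a free product $A\ast B$ may well have even-rank abelianization; I expect this to be the main obstacle, and the step where the K\"ahler hypothesis must be used on the covering itself rather than on the abstract group. Note in particular that $G$ is only a quotient of the K\"ahler group $\Gamma$, and passing to quotients does not preserve the number of ends, so one cannot simply invoke one-endedness of $\Gamma$.

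To handle the infinitely-ended case I would argue analytically on $\hat X$. By the Gromov and Napier--Ramachandran structure theory for coverings of compact K\"ahler manifolds, a covering with at least three ends admits a proper holomorphic map onto a Riemann surface, equivariant for the deck group; this descends to an orbifold fibration $X\to C$ over a compact orbicurve $C$ with infinite orbifold fundamental group, and realizes $\hat X$ as the pullback of the corresponding covering of the Riemann surface. Consequently the ends of $\hat X$ are computed by the ends of that covering of $C$, that is, by a quotient $\bar G$ of $G$ acting cocompactly on the Riemann surface. Since $C$ is then hyperbolic or Euclidean, $\bar G$ is virtually a cocompact surface group (or virtually $\Z^2$), hence one-ended, which forces $e(\hat X)=1$ and contradicts $e(G)=\infty$. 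The delicate points I would need to pin down are that the proper holomorphic map does not create spurious ends along its fibers and that the induced fibration is compatible with the maximality of the Malcev covering, so that the reduction to the one-dimensional base is legitimate; once these are in place the theorem follows.
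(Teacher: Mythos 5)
Your reduction to the deck group and your treatment of the two--ended case are fine: $G=\Gamma_0^{\mathrm{nilp}}$ is indeed finitely generated and torsion-free, $e(\hat X)=e(G)$ by \v{S}varc--Milnor, and $G\cong\Z$ would force $b_1(X)=1$, which Hodge theory forbids. The genuine gap is in the infinitely--ended case, and it is not a ``delicate point to pin down'' but a step that fails. After producing the proper holomorphic fibration $\hat X\to R$ over a Riemann surface and the induced covering $R\to C$ of an orbicurve with deck group $\bar G$, you assert that $\bar G$ is virtually a cocompact surface group (or virtually $\Z^2$), hence one-ended. That is false: $R$ is a \emph{noncompact} surface, so $\bar G$ is merely a quotient of $\pi_1^{\mathrm{orb}}(C)$ by the image of $\pi_1(R)$, and such quotients can perfectly well have infinitely many ends --- e.g.\ a genus-two surface group surjects onto $F_2$, and the corresponding cover $R$ has infinitely many ends. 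Since $e(\hat X)=e(R)=e(\bar G)$, the fibration does not contradict $e(G)=\infty$; it is exactly the structure that realizes infinitely many ends. Indeed your argument in this case uses nothing about $G$ beyond torsion-freeness, so if it worked it would also ``prove'' that the $F_2$-cover of a genus-two curve cannot have infinitely many ends, which is absurd. What is missing is precisely where the \emph{maximality} of the Malcev quotient must enter: you would need to show that a residually torsion-free nilpotent group arising as the full Malcev quotient of a K\"ahler group cannot be a nontrivial free product, and your route reduces this to the same question for the quotient $\bar G$ acting on a curve, i.e.\ back to the original problem.

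The paper's proof avoids ends of groups and Stallings entirely. It runs the tower of higher Albanese maps $\alpha_s\colon X\to A_s$, uses the stabilization of the image $Y=\mathrm{Image}(\alpha_s)$ for $s\gg 0$ (\cref{lem:stable_image}), and splits on $\dim Y$. If $\dim Y=1$ it reduces to a curve $Z$ with $\Gamma_0^{\mathrm{nilp}}\cong\pi_1(Z)_0^{\mathrm{nilp}}=\pi_1(Z)$ (Baumslag), so the Malcev cover is the universal cover of $Z$, which has one end. If $\dim Y\geqslant 2$, the relevant covering $\hat Y$ is a closed connected subvariety of the vector space $\widetilde{A_s(X)}$, hence Stein; after normalization and Stein factorization one gets a Stein space of dimension $\geqslant 2$, which has one end, and properness with connected fibers carries this back to $X^{\mathrm{nilp}}$. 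If you want to salvage your approach you would have to replace the false one-endedness claim for $\bar G$ by an argument of this analytic type.
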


The Malcev covering can be roughly seen as the universal covering with the restriction of being nilpotent and torsion-free, and this is analog to the same result for universal coverings (\cite{Gromov},\cite{ABR}).

The main techniques we apply to prove these results are higher Albanese manifolds introduced by Hain (\cite{Hain87}, see \cref{subsec:higher_alb}) and the canonical coordinates for nilpotent groups introduced by Malcev (\cite{Malcev}, see \cref{subsec:canonical_coord}).

\section{On the holomorphic convexity of abelian coverings}

\subsection{The proof in the abelian case}
The easiest situation is for an abelian covering without two ends over a compact K\"ahler surface, which is well-known to experts (page 526, \cite{KatzarkovRamachandran98}). We include its proof here for completeness. This also inspires us for the proof in the nilpotent case.

Let $X$ be a compact K\"ahler surface with the universal covering $\widetilde{X}$, and $\Gamma=\pi_1(X)$ be its fundamental group. Take $\rho:\Gamma\to A$ as a group homomorphism onto an abelian group $A$. We take $X_{\rho}=\widetilde{X}\slash \mathrm{Ker}(\rho)$. This is the regular covering of $X$ with the properties that $\mathrm{Gal}(X_{\rho}\slash X)=A$ and $\pi_1(X_{\rho})=\mathrm{Ker}(\rho)$ and we will call $X_{\rho}$ the covering of $X$ associated with $\rho$. Since $\Gamma$ is finitely generated and $A$ is abelian, we know the torsion $\mathrm{Tor}(A)$ is a finite group. We will study the holomorphic convexity of these intermediate coverings. Since a finite ramified covering of a holomorphically convex covering is also holomorphically convex, we may assume that $A$ is torsion-free.

With the notations mentioned above, we have
\begin{proposition}\label{prop:abelian_cover}
Let ${X}_{\rho}$ be an infinite abelian covering over a compact K\"ahler surface $X$ and it does not have two ends, then ${X}_{\rho}$ is holomorphically convex.
\end{proposition}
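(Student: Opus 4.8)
The plan is to push everything onto the Albanese map and to let the number of ends dictate the algebra of $A$. Since $X_{\rho}$ is infinite and $A$ is torsion-free abelian, $A\cong \Z^{r}$ for some $r\geq 1$. As $X$ is compact, $X_{\rho}$ is quasi-isometric to $A$, so the number of ends of $X_{\rho}$ equals that of $\Z^{r}$, which is two when $r=1$ and one when $r\geq 2$. The hypothesis that $X_{\rho}$ does not have two ends therefore forces $r\geq 2$. This is the only place the ends condition is used, but it is essential: the excluded rank-one case is genuinely delicate and is handled separately by \cref{thm:two_ends_BNS}.

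Next I would set up the Albanese picture. The map $\rho$ factors through $\Lambda:=H_{1}(X,\Z)/\mathrm{Tor}=\pi_{1}(\mathrm{Alb}(X))$, a lattice of rank $2q$ in $\C^{q}$ with $q=h^{1,0}(X)$. Writing $K:=\ker(\Lambda\to A)$, of rank $2q-r$, the covering of $\mathrm{Alb}(X)$ associated with $\rho$ is the complex Lie group $\mathrm{Alb}_{\rho}:=\C^{q}/K$, and the proper Albanese map $a\colon X\to \mathrm{Alb}(X)$ lifts to an $A$-equivariant holomorphic map $a_{\rho}\colon X_{\rho}\to \mathrm{Alb}_{\rho}$. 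Because $a_{\rho}$ lifts a proper map between the compact quotients and is equivariant for the same deck group $A$, it is itself proper. It therefore suffices to prove that the image $Z_{\rho}:=a_{\rho}(X_{\rho})$, a closed analytic subvariety of $\mathrm{Alb}_{\rho}$, is holomorphically convex: composing the proper map $X_{\rho}\to Z_{\rho}$ with the Remmert reduction of $Z_{\rho}$ yields a proper holomorphic map from $X_{\rho}$ to a Stein space, which is precisely holomorphic convexity.

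I would then split according to $\dim Z$, where $Z:=a(X)$ generates $\mathrm{Alb}(X)$ by the universal property. If $\dim Z=1$, then $Z$ generates $\mathrm{Alb}(X)$ as a curve, so all holomorphic one-forms, and hence $\Lambda$ itself, are pulled back from a smooth projective curve: Stein factorization of $a$ produces a fibration $f\colon X\to C$ through which $\rho$ factors. Then $f$ lifts to a proper map $X_{\rho}\to C_{\rho}$ with compact fibers onto an infinite covering $C_{\rho}$ of $C$; since every infinite covering of a compact Riemann surface is a non-compact Riemann surface, hence Stein, holomorphic convexity of $X_{\rho}$ follows. The substantial case is $\dim Z=2$, where $a$ is generically finite onto a surface $Z$ of maximal Albanese dimension. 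Here I would pass to the maximal Stein quotient $\pi\colon \mathrm{Alb}_{\rho}\to S$ (the quotient of $\C^{q}/K$ by its toroidal part, $S$ a generalized cylinder and hence Stein) and show that $\pi|_{Z_{\rho}}\colon Z_{\rho}\to S$ is proper, which gives the desired proper map to a Stein space.

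The main obstacle is exactly this last properness. It amounts to showing that the general-type surface $Z$ does not escape to infinity along the compact toroidal directions collapsed by $\pi$, i.e. that $Z$ meets each such subtorus in a set that is proper over a generic point. This is precisely the phenomenon that can fail for a $\Z$-covering, where the single modded-out direction may produce genuine Cousin-group behaviour — no non-constant holomorphic functions and no proper map to a Stein space — so that ruling it out is where $r\geq 2$, i.e. one-endedness, does the real work. I expect to settle it by combining Ueno's structure theorem for subvarieties of complex tori (a subvariety of general type generates a subtorus and is invariant under no positive-dimensional subtorus) with the control on $K$ coming from $\mathrm{rank}\,A\geq 2$, thereby reducing to the situation in which $\mathrm{Alb}_{\rho}$ itself, or the relevant subvariety of it, is holomorphically convex.
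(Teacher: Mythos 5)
Your reduction to the Albanese picture, the identification $r\geq 2$ from the ends hypothesis, and the treatment of the case $\dim a(X)=1$ all match the paper's setup and are fine. The gap is in the main case $\dim a(X)=2$, which you correctly identify as the obstacle but do not actually prove: ``I expect to settle it by combining Ueno's structure theorem\dots'' is a plan, not an argument. Worse, the plan rests on a false structural premise. The holomorphic reduction of $\mathrm{Alb}_{\rho}=\C^{q}/K$ is the quotient by its maximal \emph{toroidal} (Cousin) subgroup, and that subgroup is in general \emph{noncompact}; moreover the Stein quotient $S$ can be a single point even when $r\geq 2$ (e.g.\ $q=3$, $\mathrm{rank}\,K=4$ can give a Cousin group with no nonconstant holomorphic functions). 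So the directions collapsed by $\pi$ are not ``compact toroidal directions,'' properness of $\pi|_{Z_{\rho}}$ is exactly the unresolved difficulty, and Ueno's theorem about subtori gives you no handle on it. The Cousin phenomenon you invoke to explain why $r=1$ is excluded does not disappear when $r\geq 2$: holomorphic functions on $\mathrm{Alb}_{\rho}$ may still fail to exist, which is why no argument based on producing a proper map to a Stein quotient of $\mathrm{Alb}_{\rho}$ can work directly.

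The paper circumvents this by working with \emph{pluriharmonic} rather than holomorphic functions. It writes the covering of the Albanese torus as a real-analytic product $T\times\R^{r}$, pulls back the $\R^{r}$-coordinates to pluriharmonic functions $f_{1},\dots,f_{r}$ on $X_{\rho}$, and forms the real-analytic plurisubharmonic exhaustion $g=\sum f_{i}^{2}$. The whole content is then the dichotomy on the degeneracy locus $E$ of $\sqrt{-1}\partial\pbar g$: if $E$ is a proper analytic subset, Narasimhan's criterion (\cref{lem:theorem_of_narasimhan}) applies because in dimension two the noncompact components of $E$ are open Riemann surfaces, hence Stein; if $E=X_{\rho}$, the level sets of $g$ are Levi-flat and the Napier--Ramachandran theorem \cite{NR95} produces a proper fibration onto a Riemann surface. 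Your proposal contains no substitute for this dichotomy, and this is precisely the step where the two-dimensionality hypothesis and the cited analytic theorems are indispensable. As written, the proof is incomplete at its central point.
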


\begin{proof}
We may assume that the Galois group is $A=\Z^r$ for some $r\geqslant 1$.
By the universal property of the Albanese map, we know that $\rho$ factors through the Albanese map of $X$, say $\alpha: X\to \mathrm{Alb}(X)$, i.e. we have the commutative diagram
\begin{center}
\begin{tikzcd}
\Gamma \ar[d,swap,"\alpha_{*}"] \ar[r,"\rho"]& A \\
\pi_1(\mathrm{Alb}(X))\ar[ru,swap,"\rho"] &
\end{tikzcd}    
\end{center}

Now take the covering of $X$(resp. $Y:=\mathrm{Alb(X)}$) associated with $\rho$, we have 

\begin{center}
\begin{tikzcd}
X_{\rho} \ar[d,swap,"p_1"] \ar[r,"\alpha_{\rho}"]& {Y}_{\rho}\ar[d,"p_2"]\\
X\ar[r,"\alpha"] & Y
\end{tikzcd}    
\end{center}
where the lifting $\alpha_{\rho}$ of $\alpha$ is holomorphic and proper.

Now we need the assumption that ${X}_{\rho}$ does not have two ends, i.e. $r>1$. Thus the regular covering $Y_{\rho}$ also does not have two ends and can be written as $Y_{\rho}=T\times \R^r$, where $T$ is some torus. Denote the composition of $\alpha_{\rho}$ with the projection onto $\R^r$ as $f=(f_1,\cdots,f_r)$. Since $f_i$'s can be seen as the real or imaginary part of the holomorphic function from $X_{\rho}$ to the universal covering of $Y$(which is a complex vector space), thus they are pluriharmonic. Let $g=\sum\limits_{i=1}^{r}f_i^2:X_{\rho}\to \R$, then $g$ is exhaustive and its Levi form
$$\sqrt{-1}\partial\pbar g=2\norm{\partial {f}_1}^2+\cdots+2\norm{\partial {f}_r}^2\geqslant 0.$$

Now we need the analysis of the degeneracy locus of this Levi form. To simplify the notation, we assume that $r=2$, and the general case is similar. Consider the complex analytic subset $E:=\{x\in {X}_{\rho}:\ (\partial {f}_1 \wedge \partial{f}_2)_x=0\}$, and we have the following two cases.

\textit{Case 1:} the subset $E$ is proper, i.e. $E\subsetneq X_{\rho}$. We have that $g$ is weakly plurisubharmonic and strongly plurisubharmonic away from a proper analytic subset $E$ (i.e. $g$ is generically strongly plurisubharmonic), and ${X}_{\rho}$ is holomorphically convex by a theorem of Narasimhan (see \cref{lem:theorem_of_narasimhan} below).

\textit{Case 2:} the Levi form of $g$ degenerates everywhere, i.e. $E={X}_{\rho}$. Now $g$ is a real analytic exhaustive function of $X_{\rho}$ with its Levi form degenerated everywhere. Denote the (singular) holomorphic foliation on $X_{\rho}$ defined by $\partial f_1$(or $\partial f_2$) as $\mathscr{F}$. For any regular value $c$ of $g$, consider its level set $\{x\in X_{\rho}:g(x)=c\}$. The degeneracy locus of Levi form of $g$ in each level set is exactly the leaves of $\mathscr{F}$. This shows that the level set of $g$ is Levi-flat. By a theorem of Napier and Ramachandran (Theorem 4.6 or Theorem 4.8, \cite{NR95}), we know that $X_{\rho}$ admits a proper holomorphic map onto a Riemann surface with connected fibers, thus is holomorphically convex.
\end{proof}

To complete the proof above, we only need to recall the following theorem of Narasimhan:
\begin{theorem}[See lemma 3.1 of \cite{KatzarkovRamachandran98} for its proof]\label{lem:theorem_of_narasimhan}
Let $M$ be a complex manifold and let $g: M \to \R$ be a continuous
plurisubharmonic exhaustion function, which is a generically strongly plurisubharmonic, i.e. being strongly plurisubharmonic away from a proper complex analytic set $A \subsetneq M$. If all noncompact irreducible
components of $A$ are Stein spaces, then $M$ is holomorphically convex.  
\end{theorem}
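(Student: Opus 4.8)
The plan is to reduce the statement to Grauert's characterization of $1$-convex (strongly pseudoconvex) spaces: a complex manifold is holomorphically convex as soon as it carries a (smooth) plurisubharmonic exhaustion whose Levi form is positive definite outside some \emph{compact} subset. Thus it suffices to upgrade the given exhaustion $g$ to a plurisubharmonic exhaustion $\tilde g$ that is strongly plurisubharmonic off a compact set. To this end, decompose $A = A_0 \cup A_1$, where $A_1$ is the union of the \emph{compact} irreducible components of $A$ and $A_0$ the union of the noncompact ones; by hypothesis every component of $A_0$ is Stein. Since $A_1$ is compact it lies in a sublevel set $\{g \le c\}$, which is compact by exhaustiveness, so the degeneracy of the Levi form of $g$ along $A_1$ is already confined to a compact set and requires no repair. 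The whole problem is therefore to restore strict positivity of the Levi form along the noncompact Stein part $A_0$.

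First I would build a correction near $A_0$. Because $A_0$ is a closed Stein analytic subvariety of $M$, a theorem of Siu provides an open Stein neighborhood $U \supset A_0$ in $M$; let $\Psi \geqslant 0$ be a smooth strongly plurisubharmonic exhaustion of $U$. On $U$ the function $g + \varepsilon\Psi$ is the sum of a plurisubharmonic and a strongly plurisubharmonic function, hence strongly plurisubharmonic, and in particular its Levi form is positive definite in the directions tangent to $A_0$ along which that of $g$ degenerates. To globalize, I choose a smooth cutoff $\chi \colon M \to [0,1]$ with $\chi \equiv 1$ on a neighborhood of $A_0$ and $\operatorname{supp}\chi \subset U$, and set
\[
\tilde g \;=\; g + \varepsilon\,\chi\,\Psi .
\]
Since $\Psi \geqslant 0$ we have $\tilde g \geqslant g$, so $\{\tilde g \le c\}\subseteq\{g \le c\}$ is compact and $\tilde g$ is automatically an exhaustion. (If $g$ is only continuous one first regularizes it to a smooth plurisubharmonic function away from $A$; adding the smooth strongly plurisubharmonic term is harmless in the distributional sense.)

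It then remains to check that $\tilde g$ is strongly plurisubharmonic outside a compact set. Outside a neighborhood of $A$ the term $g$ is already strongly plurisubharmonic, and adding the plurisubharmonic term $\varepsilon\chi\Psi$ preserves this; near $A_0$ the term $\varepsilon\Psi$ supplies strict positivity as above; on the compact set $A_1$ nothing need be verified. \textbf{The main obstacle} is the transition region $U \cap \{0 < \chi < 1\}$, where the Levi form of $\chi\Psi$ acquires the cross terms $\Psi\,\sqrt{-1}\partial\pbar\chi$ and $2\,\mathrm{Re}\bigl(\sqrt{-1}\,\partial\chi \wedge \pbar\Psi\bigr)$, which may be negative, and which is noncompact precisely because $A_0$ is. I would arrange, by shrinking $U$ and $\operatorname{supp}\chi$, that this region avoids $A$ altogether, so that $g$ is strongly plurisubharmonic there; then I would absorb the cross terms into the strict positivity of $g$ — for instance by taking $\chi$ to be a function of a sublevel parameter of $\Psi$ alone, so that it varies slowly, and by choosing $\varepsilon$ small enough relative to the (possibly non-uniform) lower bound of the Levi form of $g$ off $A$. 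Once this estimate is in place, the locus where $\tilde g$ fails to be strongly plurisubharmonic is contained in the compact set $A_1 \cup \{g \le c\}$, so $\tilde g$ exhibits $M$ as a $1$-convex manifold, and Grauert's theorem then gives that $M$ is holomorphically convex.
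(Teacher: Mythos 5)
Your reduction to Grauert's theorem breaks down at the very first step, before the obstacle you flag. You assert that $A_1$, the union of the compact irreducible components of $A$, ``is compact'' and hence lies in a sublevel set. This is false in general: $A$ is only a closed analytic set, so it may have \emph{infinitely many} compact components forming a locally finite family that escapes to infinity. Concretely, let $M$ be the blow-up of a Stein surface at an infinite discrete set of points and $g$ the pull-back of a strongly plurisubharmonic exhaustion: then $g$ is a continuous plurisubharmonic exhaustion, strongly plurisubharmonic off the exceptional set $A$ (all of whose components are compact, so the Stein hypothesis on noncompact components is vacuous), and $M$ is holomorphically convex --- but $M$ is \emph{not} $1$-convex, since no function can be strongly plurisubharmonic near a positive-dimensional compact analytic set (it would attain an interior maximum on it). So no choice of $\tilde g$ can ever be strongly plurisubharmonic outside a compact set, and Grauert's theorem cannot be the endpoint of a proof of this statement in its stated generality: one must produce holomorphic convexity (i.e., the Remmert-type reduction contracting the compact components, possibly infinitely many of them) directly, which is what the argument of Lemma 3.1 in \cite{KatzarkovRamachandran98} does. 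Note that the present paper offers no proof of its own here --- it quotes the statement and cites that lemma --- so the comparison can only be with the cited source, whose route is necessarily different from yours.

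Even on the part of the problem where your strategy is viable (strictifying the Levi form along the noncompact Stein components $A_0$), the step you yourself label the main obstacle is not closed. On the noncompact collar $\{0<\chi<1\}$ the negative cross terms $\varepsilon\bigl(\Psi\,\sqrt{-1}\,\partial\pbar\chi+2\,\mathrm{Re}(\sqrt{-1}\,\partial\chi\wedge\pbar\Psi)\bigr)$ must be absorbed by the Levi form of $g$, whose smallest eigenvalue, while positive pointwise off $A$, admits no uniform lower bound there, while $\Psi$ and its derivatives may grow without bound; a single constant $\varepsilon$, however small, cannot achieve this, and ``choosing $\chi$ slowly varying'' does not repair a ratio that degenerates along the collar. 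A correct patching needs $\varepsilon$ replaced by a function (e.g.\ a composition $\lambda\circ\Psi$ with $\lambda$ convex, chosen against the decay of the eigenvalue on exhausting compacta), or a regularized-maximum gluing in place of the cutoff --- and the latter runs into the separate problem that the gluing locus $\{\Psi\approx c\}$ necessarily meets the noncompact set $A_0$, where $g$ alone degenerates. Finally, your parenthetical regularization of $g$ is also unjustified: merely plurisubharmonic continuous functions on a non-Stein manifold cannot in general be smoothed while staying plurisubharmonic (Richberg's theorem requires strong plurisubharmonicity); this particular point is harmless only because the Grauert--Narasimhan solution of the Levi problem accepts continuous exhaustions, but it should be said that way rather than via smoothing.
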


\begin{remark}\label{rmk:why_two_dim}
We can only deal with the two-dimensional case since we need to analyze the degeneracy locus of a Levi form, and the fact that any open Riemann surface is Stein permits us to use \cref{lem:theorem_of_narasimhan}.    
\end{remark}

\subsection{Why two ends is special?}
\label{subsec:two_ends}
Due to an example of Cousin (see Example 3.9 in \cite{NR95}), there exists a $\Z$-covering over a compact K\"ahler manifold with no non-constant holomorphic function, not to say that this covering is holomorphically convex. Thus it is crucial to exclude the case of two-ends in our arguments. This kind of phenomenon does not occur when we consider the universal coverings. This is because any group $G$ with two ends must be virtually $\Z$ and this cannot happen in the K\"ahler case due to the Hodge theory (for an analog on the Malcev covering, see \cref{thm:one_end_malcev}). But we can certainly have $\Z$ as the quotient of a K\"ahler group, thus the two-ended case appear for intermediate coverings. 

We can say some things about the holomorphic convexity of virtually $\Z$-covering over a compact K\"ahler manifold, and it is simply a re-explanation of Theroem 4.3 in \cite{NR01GAFA} using Delzant's results\cite{Delzant10} on the BNS-invariant of K\"ahler groups.

\begin{theorem}\label{thm:two_ends_BNS}
Let $X$ be a compact K\"ahler manifold with fundamental group $\Gamma$ and $\rho:\Gamma\to \Z$ be a surjective homomorphism. If $[\rho]\notin \Sigma(\Gamma)$ where $\Sigma(\Gamma)$ is the BNS-invariant of $\Gamma$\footnote{For the definition and its property, see \cite{BNS}. If $\rho:\Gamma\to \R$ is a real character, we denote its equivalent class under scalar multiplication by a positive real number as $[\rho]$. These equivalent classes are collected as $S(\Gamma)$. For a subgroup $N$, we use $S(\Gamma, N)$ to denote the subset of $S(\Gamma)$ with $\rho(N)\equiv 0$.}, then the associated covering $X_{\rho}$ admits a proper holomorphic map onto a Riemann surface with connected fibers and is thus holomorphically convex.
\end{theorem}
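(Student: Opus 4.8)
The plan is to reduce the statement to Theorem 4.3 of \cite{NR01GAFA} by reinterpreting the hypothesis $[\rho]\notin\Sigma(\Gamma)$ through Delzant's description of the BNS-invariant of K\"ahler groups. First I would invoke Delzant's theorem \cite{Delzant10}: for a compact K\"ahler manifold $X$ with $\Gamma=\pi_1(X)$, the complement $S(\Gamma)\setminus\Sigma(\Gamma)$ is symmetric under $[\rho]\mapsto[-\rho]$, and every class in it is pulled back from a surjective holomorphic map with connected fibers $f\colon X\to C$ onto a hyperbolic (general type) orbicurve $C$. Applying this to our class, the assumption $[\rho]\notin\Sigma(\Gamma)$ produces such a fibration $f$ together with a factorization $\rho=\psi\circ f_{*}$, where $f_{*}\colon\Gamma\twoheadrightarrow\pi_1^{\mathrm{orb}}(C)$ and $\psi\colon\pi_1^{\mathrm{orb}}(C)\to\Z$ is surjective. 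Note also that, since $[\rho]\notin\Sigma(\Gamma)$, the fundamental theorem of BNS theory \cite{BNS} already forces $\ker\rho=\pi_1(X_\rho)$ to be infinitely generated.

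This factorization is exactly the input demanded by Theorem 4.3 of \cite{NR01GAFA}, which under precisely this hypothesis---that $\rho$ factor through an orbicurve fibration of the above type---produces a proper holomorphic map from $X_\rho$ onto a Riemann surface with connected fibers. Concretely, I would realize this map by lifting $f$ to the coverings: because $\rho=\psi\circ f_{*}$, the subgroup $\ker\rho=f_{*}^{-1}(\ker\psi)$ is compatible with the $\Z$-covering $\tilde C\to C$ determined by $\ker\psi$, so $f$ lifts to a holomorphic map $f_\rho\colon X_\rho\to\tilde C$ intertwining the two deck $\Z$-actions. Since $C$ is a compact hyperbolic orbicurve and $\psi$ is onto $\Z$, the covering $\tilde C$ is infinite and its coarse space is an \emph{open} Riemann surface. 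Passing to the coarse space and taking the Stein factorization, $f_\rho$ yields a proper holomorphic map $g\colon X_\rho\to R$ with connected fibers onto a Riemann surface $R$.

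Holomorphic convexity is then immediate. An open Riemann surface is Stein, so $R$, being finite over the open Riemann surface underlying $\tilde C$, is itself open and hence Stein. A complex manifold that admits a proper holomorphic map onto a Stein (in particular holomorphically convex) space is itself holomorphically convex, which gives the asserted holomorphic convexity of $X_\rho$.

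The main obstacle I anticipate is matching Delzant's orbifold-theoretic output with the precise hypotheses of \cite{NR01GAFA}, and, relatedly, checking that the lifted map $f_\rho$ descends to a genuinely \emph{proper} holomorphic map with connected fibers onto the open Riemann surface. The delicate points are the behaviour over the (infinitely many) cone points of $C$ when passing to the coarse space, together with the compatibility of the two $\Z$-coverings; both are handled by the covering-space arguments of Napier and Ramachandran, so that the remaining work is genuinely a translation of their criterion into the language of the BNS-invariant rather than a new analytic input.
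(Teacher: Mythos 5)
Your proposal is correct, but it runs along a genuinely different track from the paper's. The paper's proof is purely group-theoretic until the last step: it combines Theorem B1 of \cite{BNS} (the kernel $N=\ker\rho$ is finitely generated iff $S(\Gamma,N)=\{[\rho],[-\rho]\}\subseteq\Sigma(\Gamma)$) with the symmetry $\Sigma(\Gamma)=-\Sigma(\Gamma)$ from \cite{Delzant10} to conclude that $[\rho]\notin\Sigma(\Gamma)$ forces $N$ to be infinitely generated, and then cites the proof of Theorem 4.3 of \cite{NR01GAFA} --- whose hypothesis is precisely a surjection onto $\Z$ with infinitely generated kernel --- as the engine that produces the proper fibration of $X_\rho$ over a Riemann surface. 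You instead use the full strength of Delzant's structure theorem (every class outside $\Sigma(\Gamma)$ is pulled back from a fibration onto a hyperbolic orbicurve) to get the fibration $f\colon X\to C$ on the base, and then build the proper map yourself by lifting $f$ to the $\Z$-coverings and Stein-factorizing; the infinite generation of $\ker\rho$, which you note, plays no role in your construction. Both routes are sound, and they are close cousins, since Delzant's theorem and Napier--Ramachandran's fibration theorems rest on related analytic inputs; yours has the merit of exhibiting the target Riemann surface explicitly as (the coarse space of) the $\Z$-covering of the orbicurve, while the paper's is shorter and defers all the geometry to \cite{NR01GAFA}. One small inaccuracy to fix if you write this up: you describe the hypothesis of Theorem 4.3 of \cite{NR01GAFA} as being ``that $\rho$ factor through an orbicurve fibration,'' but that is really its conclusion (or an intermediate output of its proof); its stated hypothesis is the infinite generation of the kernel. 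This does not affect your argument, since you construct the lifted fibration directly rather than quoting that theorem, but the attribution should be corrected. (Incidentally, for the implication actually proved --- as opposed to the ``if and only if'' flavor of the discussion --- neither your argument nor the paper's needs the symmetry of $\Sigma(\Gamma)$: $[\rho]\notin\Sigma(\Gamma)$ already violates the inclusion $\{[\rho],[-\rho]\}\subseteq\Sigma(\Gamma)$.)
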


\begin{proof}
Due to the Theorem B1 in \cite{BNS}, we know that $N=\mathrm{Ker}(\rho)$ is finitely generated if and only if $S(\Gamma,N)=\{[\rho],[-\rho]\}\subseteq \Sigma(\Gamma)$. By the explanation of the BNS-invariant for K\"ahler groups (Theorem 1.1, \cite{Delzant10}), we know that $\Sigma(\Gamma)$ is symmetric in the sense that $\Sigma(\Gamma)=-\Sigma(\Gamma)$. Thus we have $N$ is finitely generated if and only if $[\rho]\in\Sigma(\Gamma)$. Now if $[\rho]\notin\Sigma(\Gamma)$, we have $N$ is not finitely generated, then by the proof of Theorem 4.3 in \cite{NR01GAFA} the conclusion is true.
\end{proof}

\section{From abelian case to nilpotent case}
In this section, we prove our main theorem
\begin{theorem}\label{thm:nilpotent_covering}
Let $X$ be a compact K\"ahler surface with fundamental group $\Gamma$ and $\rho:\Gamma\to D$ be a group homomorphism onto an infinite virtually nilpotent group $D$. Take
${X}_{\rho}$ be the covering associated with $\rho$ and assume that it does not have two ends, then ${X}_{\rho}$ is holomorphically convex.
\end{theorem}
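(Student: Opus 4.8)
The plan is to reduce the nilpotent case to the abelian proof of \cref{prop:abelian_cover} by replacing the Albanese map with Hain's higher Albanese map, which is the natural target for a nilpotent quotient of the fundamental group. Since virtually $\mathcal P$-coverings are holomorphically convex exactly when the corresponding $\mathcal P$-coverings are, and since a finite \'etale (ramified) covering of a holomorphically convex space is again holomorphically convex, I would first pass to a torsion-free nilpotent quotient and assume $D$ is a finitely generated torsion-free nilpotent group. The key input is that $\rho:\Gamma\to D$ factors (after replacing $D$ by an appropriate Malcev-completion-compatible nilpotent group) through the $s$-th higher Albanese manifold $\mathrm{Alb}_s(X)$ for some $s$, exactly as the abelian $\rho$ factored through $\pi_1(\mathrm{Alb}(X))$; this is the content of \cref{subsec:higher_alb}. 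One then gets a holomorphic proper lift $\alpha_\rho: X_\rho\to (\mathrm{Alb}_s(X))_\rho$ fitting into a commutative square over $\alpha: X\to \mathrm{Alb}_s(X)$, and properness reduces the holomorphic convexity of $X_\rho$ to that of the covering $(\mathrm{Alb}_s(X))_\rho$ of the higher Albanese manifold together with an analysis of the exhaustion pulled back along $\alpha_\rho$.

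Next I would build the exhaustion function and its Levi form. The universal covering of $\mathrm{Alb}_s(X)$ is a nilpotent complex Lie group, and via Malcev's canonical coordinates (\cref{subsec:canonical_coord}) its underlying real manifold is $\mathbb{R}^N$ with globally defined coordinate functions $w_1,\dots,w_N$ that are the real/imaginary parts of the canonical holomorphic coordinates. Pulling these back through $\alpha_\rho$ gives pluriharmonic functions $f_1,\dots,f_N$ on $X_\rho$ — the nilpotent analogue of the linear coordinates used in the abelian proof — and I would set $g=\sum_i f_i^2$, which is exhaustive (using that $X_\rho$ has more than one end, so the coordinate directions genuinely escape to infinity) with $\sqrt{-1}\,\partial\pbar g = 2\sum_i \norm{\partial f_i}^2\geqslant 0$. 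The subtlety here, absent in the abelian case, is that canonical coordinates are only \emph{generalized} polynomial rather than genuinely pluriharmonic at higher weight, because the group law is nonabelian; so I would take $f_i$ to be the pullbacks of the first-filtration (abelianized) canonical coordinates, or more carefully the real/imaginary parts of the holomorphic canonical coordinates, and verify that the resulting $g$ is still a plurisubharmonic exhaustion whose Levi form degenerates along a complex analytic set.

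With the exhaustion in hand I would run the exact dichotomy of \cref{prop:abelian_cover}. Form the degeneracy locus $E=\{x\in X_\rho:\ (\partial f_1\wedge\cdots\wedge\partial f_N)_x=0\}$ (or the appropriate maximal-rank condition for the Levi form). In the generic case $E\subsetneq X_\rho$, the function $g$ is a generically strongly plurisubharmonic exhaustion and, since every noncompact component of the analytic set $E$ lives in a surface and is therefore an open Riemann surface hence Stein, \cref{lem:theorem_of_narasimhan} gives holomorphic convexity directly. In the degenerate case $E=X_\rho$, the level sets of $g$ are Levi-flat, foliated by the leaves of the holomorphic foliation $\mathscr F$ defined by the $\partial f_i$, and the theorem of Napier and Ramachandran (Theorem 4.6 / 4.8 of \cite{NR95}) produces a proper holomorphic map onto a Riemann surface with connected fibers, again forcing holomorphic convexity.

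The main obstacle I anticipate is the step that in the abelian case was trivial: the factorization of $\rho$ through a higher Albanese manifold and the resulting \emph{properness} of the lift $\alpha_\rho$. Unlike the ordinary Albanese, the higher Albanese map is a priori only defined after passing to the relevant Malcev completion and need not be proper or even have good fibers without assumptions; controlling its image and showing that holomorphic convexity descends across $\alpha_\rho$ (so that the Levi-form analysis performed downstairs on $(\mathrm{Alb}_s(X))_\rho$ actually governs $X_\rho$) is where the nonabelian structure bites, and it is precisely here that the surface hypothesis $\dim X=2$ and the no-two-ends hypothesis must be used to guarantee that the degeneracy locus is a Stein (open Riemann surface) and that $g$ remains exhaustive.
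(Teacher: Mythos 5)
Your overall strategy --- factor $\rho$ through Hain's higher Albanese manifold, build a plurisubharmonic exhaustion from squares of pluriharmonic functions coming from Malcev coordinates, then run the Narasimhan / Napier--Ramachandran dichotomy of \cref{prop:abelian_cover} --- is exactly the paper's. But there is a genuine gap in your construction of the exhaustion. You pull back \emph{all} $N$ canonical coordinates $w_1,\dots,w_N$ of the universal covering $\C^{N_s}$ of $A_s$ through $\alpha_\rho\colon X_\rho\to (A_s)_\rho$. Those functions live on the universal covering, not on the intermediate covering $(A_s)_\rho=G/\Theta$ with $\Theta=\ker(\Gamma^s\to D)$; only the coordinates that are invariant under the deck transformations by $\Theta$ descend, so your $f_1,\dots,f_N$ are not well defined on $X_\rho$ as stated. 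The paper resolves exactly this point with the structure theory of coverings of nilmanifolds (\cref{subsec:cover_nilmanifold}): $(A_s)_\rho$ is topologically $M_1\times\R^k$, the canonical generators split into type I (some power lies in $\Theta$) and type II, and only the second-kind coordinates $t_j$ attached to the $k$ type II generators give globally defined real-analytic (and, after the $F^0$-quotient, pluriharmonic) functions downstairs. The exhaustion is $g=\sum_{j\in J}f_j^2$ over the type II indices only.

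Two consequences follow that your sketch misses or misplaces. First, the no-two-ends hypothesis enters precisely to guarantee $|J|=k\geqslant 2$ (if $k=1$ the covering has two ends), i.e.\ to provide at least two independent pluriharmonic functions so that the Levi form of $g$ can attain full rank $2$ generically and so that the Napier--Ramachandran alternative applies in the degenerate case; it is not what makes the degeneracy locus Stein (that is automatic from $\dim X=2$, cf.\ \cref{rmk:why_two_dim}). Second, once $g$ only involves the type II coordinates, exhaustiveness is no longer the tautology ``the coordinates escape to infinity'': one must argue that the type I coordinates are automatically bounded on $G/\Theta$ and then invoke Malcev's lemma (\cref{prop:bounded_second_coord}) that a set with bounded second-kind coordinates is relatively compact, before pulling back by $\alpha_\rho$. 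By contrast, the properness of $\alpha_\rho$, which you single out as the main obstacle, is immediate: it is the lift of a holomorphic map of compact spaces to coverings with the same deck group $D$. With these two repairs your argument coincides with the paper's proof.
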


Recall that a group $D$ is virtually nilpotent if it contains a (normal) subgroup of finite index which is nilpotent. Since the passing to a finite covering does not affect the holomorphic convexity, we can deal with this slightly general situation. Besides, the torsion subgroup of a finitely generated nilpotent group is finite, thus we can assume without loss of generality that $D$ is \textit{torsion-free and nilpotent}.

To prove the main theorem, we need to use the higher Albanese map by R. Hain \cite{Hain87} to replace the Albanese map. The situation is more delicate due to the group actions, and we need to analyze the intermediate covering of a higher Albanese manifold.

\subsection{Higher Albanese map and manifolds revisited}\label{subsec:higher_alb}
The readers may refer to \cite{Hain87}, \cite{Leroy}, and \cite{Claudon} for more details.

Let $\Gamma$ be any finitely generated group. Consider the descending central series of $\Gamma$:
$$\Gamma = \Gamma_1 \trianglerighteq \Gamma_2 \trianglerighteq \cdots \trianglerighteq \Gamma_n \trianglerighteq \cdots$$
where $\Gamma_{i+1}=[\Gamma,\Gamma_{i}]$ for each $i$.
We say $\Gamma$ is nilpotent if this series stops in finitely many steps, i.e. $\Gamma_{n}=\{1\}$ for some $n$. Furthermore, we say $\Gamma$ is $s$-nilpotent if $\Gamma_{s+1}=\{1\}$ and $\Gamma_{s}\neq\{1\}$. For example, a nontrivial abelian group is $1$-nilpotent. 

By construction, $\Gamma\slash \Gamma_{s+1}$ is $s$-nilpotent and its subgroup of torsion element is finite. Denote $\Gamma^{s}=(\Gamma\slash \Gamma_{s+1})\slash\mathrm{Tor}$ to be the maximal torsion-free $s$-nilponent quotient of $\Gamma$. 

Now let $X$ be a compact K\"ahler manifold with fundamental group $\Gamma$. According to Hain, we can construct the $s$-th higher albanese manifold $A_s$ and holomorphic morphisms $\alpha_s$ such that the following diagram commutes:

    \begin{equation}\label{diag:higher_alb}
    \begin{tikzcd}
        & X \ar[dd,"\alpha_s",swap]\ar[ddr,"\alpha_{s-1}"]\ar[rrrdd,"\alpha_1"]& & &\\
        &&&&\\
        \ ...\ \ar[r]&A_s \ar[r,"\pi_{s-1}"]& A_{s-1} \ar[r,"\pi_{s-2}"]&\ ...\ \ar[r,"\pi_1"]& A_1=\mathrm{Alb}(X)
    \end{tikzcd}
    \end{equation}

We have the following properties:
\begin{itemize}
    \item $\pi_1(A_s)=\Gamma^{s}$ and $\alpha_s$ induces the natural group homomorphism $\Gamma\to \Gamma^s$,
    \item The universal covering of $A_s$ is analytically isomorphic to a complex vector space $\C^{N_s}$.
\end{itemize}

The construction goes roughly as follows. Due to a theorem of Malcev (Theorem 6, \cite{Malcev}), for $\Gamma^s$ being a finitely generated torsion-free $s$-nilpotent group, we have a simply connected real $s$-nilpotent Lie group, say $G^{s}(\R)$, with $\Gamma^s$ as its uniform lattice. Moreover, this $G^{s}(\R)$ is unique up to isomorphism. Now take $G^s(\C)$ and we need to modula the ``antiholomorphic part'' $F^0(G^s(\C))$ (see approach 2, section 4 in \cite{Leroy}) to make sure (i) $\Gamma^{s}$ is a free uniform lattice of $G^s(\C)\slash F^0(G^s(\C))$ and (ii) $\alpha_s: X\to A_s:=\Gamma^s\backslash G^s(\C)\slash F^0(G^s(\C))$ is holomorphic. 

\subsection{Canonical coordinates for nilpotent groups}\label{subsec:canonical_coord}
This section is taken from the work of Malcev (\cite{Malcev}). For $D$ as a finitely generated torsion-free nilpotent group, we say the basis $\{d_1,\cdots, d_r\}$ of $D$ is canonical if we have 
\begin{itemize}\label{discrete_coordinate}
    \item any $d\in D$ can be written uniquely as $d=d_1^{n_1}\cdot\ldots \cdot d_r^{n_r}$ where the powers are all integers,
    \item the collection of elements in the form $d_i^{n_i}\cdot\ldots\cdot d_r^{n_r}$ form a normal subgroup $D_i\subset D$,
    \item $D_i\slash D_{i+1}$ is infinite cyclic.
\end{itemize}
and we call $\{n_1,\ldots,n_r\}$ the canonical coordinates. 

Moreover, if we embed $D$ into the simply connected real Lie group $G$ as a uniform lattice, we have a system of $1$-parameter subgroups $x_i(t)$ (with $t\in \R$ is the parameter), $1\leqslant i\leqslant r$ such that 
\begin{itemize}\label{continous_coord}
    \item for any $g\in G$, it can be written as $g=x_1(t_1)\cdot\ldots\cdot x_r(t_r)$ with $t_i\in \R$,
    \item the collection of elements in the form $x_i(t_i)\cdot\ldots\cdot x_r(t_r)$ form a normal subgroup $G_i\subset G$,
    \item $G_i\slash G_{i+1}$ is $\R$,
    \item $x_i(1)=d_i$ for $1\leqslant i \leqslant r$.
\end{itemize}

These $(t_1,\ldots,t_r)$ provide a real analytic coordinate of $G$, and they are called the coordinates of the second kind. We have the following property of the coordinates of the second kind.
\begin{proposition}(Lemma 2, \cite{Malcev})\label{prop:bounded_second_coord}
Any discrete subgroup $H\subseteq G$ is a uniform lattice if $H$ contains $x_i(1)$ for all $1\leqslant i\leqslant r$. In particular, the subset with bounded values in coordinates of the second kind is relatively compact.    
\end{proposition}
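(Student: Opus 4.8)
The plan is to treat the two assertions separately, both built on the elementary observation that the one-parameter subgroups $x_i(t)$ recover the given lattice $D$ at their integer points. First I would record that $x_i(1) = d_i$ and $x_i(n) = d_i^{\,n}$, so that the subgroup generated by $x_1(1),\ldots,x_r(1)$ is exactly $D = \langle d_1,\ldots,d_r\rangle$; moreover an element $d = d_1^{n_1}\cdots d_r^{n_r}$ of $D$ has coordinates of the second kind $(n_1,\ldots,n_r)$. In other words, in these coordinates $D$ is precisely the integer lattice $\Z^r \subseteq \R^r \cong G$. For the first assertion, suppose the discrete subgroup $H$ contains every $x_i(1)$; then $H \supseteq D$. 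Since $D$ is a uniform lattice, the coset space $G\slash D$ is compact, and the natural continuous surjection $G\slash D \twoheadrightarrow G\slash H$ (well defined because $D \subseteq H$) exhibits $G\slash H$ as a continuous image of a compact space, hence compact. As $H$ is discrete by hypothesis, it is a discrete cocompact subgroup, i.e.\ a uniform lattice. Equivalently, the image of $H$ in $G\slash D$ is a closed discrete subset of a compact space, hence finite, so $[H:D]<\infty$ and $H$ is a uniform lattice as a finite extension of $D$.

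For the ``in particular'' statement I would exploit that, for a simply connected nilpotent Lie group, the group law is polynomial in the coordinates of the second kind, so the coordinates of a product $gh$ are bounded whenever those of $g$ and of $h$ are. Concretely, fix $C>0$ and let $S$ be the set of elements whose coordinates of the second kind lie in $[-C,C]$. Choosing a compact $K$ with $G = DK$ (possible since $D$ is a uniform lattice), write any $g \in S$ as $g = d\,k$ with $d \in D$ and $k \in K$; as $K$ is compact its coordinates are bounded, so $d = g\,k^{-1}$ has bounded coordinates by polynomiality of the group law. But $d$ ranges over the integer points $\Z^r$, and a bounded set of integer points is finite, so only finitely many $d$ occur. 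Hence $S \subseteq \bigcup_{\mathrm{finite}} dK$ is relatively compact.

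A quicker, more topological route to the same conclusion is to observe that the coordinate map $\phi\colon \R^r \to G$, $\phi(t_1,\ldots,t_r) = x_1(t_1)\cdots x_r(t_r)$, is a continuous bijection between manifolds of equal dimension, hence a homeomorphism by invariance of domain; then the bounded-coordinate set is the image $\phi([-C,C]^r)$ of a compact box and is compact. The main point to handle with care is exactly this passage from \emph{bounded coordinates} to \emph{relatively compact}: a priori the coordinates of the second kind only furnish a set-theoretic bijection, and one must invoke either the polynomial group law (to control products and reduce to finitely many lattice translates) or invariance of domain (to know $\phi$ is proper) in order to turn coordinate boundedness into genuine relative compactness. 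Once $D \subseteq H$ is noted, the first assertion is then purely formal.
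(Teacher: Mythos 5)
Your argument is correct as a proof of the statement in the form the paper gives it, but note that the paper itself offers no proof at all here --- it simply cites Lemma~2 of \cite{Malcev}, whose original argument runs by induction along the normal series $G_1\trianglerighteq G_2\trianglerighteq\cdots$, using that each $G_i/G_{i+1}\cong\R$ contains the image of $x_i(\Z)$ cocompactly and assembling compactness of $G/H$ from that tower. Your route is more elementary: you take the setup of \cref{subsec:canonical_coord} at face value (namely that $D$ is already embedded in $G$ as a uniform lattice with $x_i(1)=d_i$), observe $H\supseteq\langle d_1,\ldots,d_r\rangle=D$ so that $G/H$ is a continuous image of the compact $G/D$, and handle the ``in particular'' either via polynomiality of the group law or, more cleanly, via invariance of domain applied to the coordinate bijection $\phi\colon\R^r\to G$. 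Both of your arguments for the second assertion are sound, and your emphasis on why \emph{bounded coordinates} implies \emph{relatively compact} is exactly the right point to isolate. Two caveats. First, under your reading the first assertion is nearly tautological (any discrete subgroup containing a uniform lattice is uniform); in Malcev's original lemma the content is to establish cocompactness of the subgroup generated by the $x_i(1)$ \emph{without} presupposing it, and if the proposition were meant in that stronger sense your part one would be circular --- as the paper sets things up, it is not. Second, the paper actually invokes the ``in particular'' for subsets of the quotient $M_\rho=G/\Theta$ rather than of $G$ itself; your statement covers the version for $G$, which is what is literally asserted, but be aware that transferring it to the coset space requires choosing representatives with controlled coordinates and is not a formal consequence of the statement about $G$.
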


Compared with the descending central series for a $s$-nilpotent group $\Gamma$ introduced in the last section, we can write $\Gamma_i\slash \Gamma_{i+1}$ as the abelian group generated by least elements $e_{i,1},\ldots, e_{i,m_i}$, and we can arrange them together, say $\{e_{1,1},\ldots,e_{1,m_1};\ldots;e_{s,1},\ldots,e_{s,m_s}\}$ and relabel them (with order preserved) as $\{d_1,\ldots,d_r\}$. Roughly speaking, we write any element using the product from the least commutative part to the most commutative part, from left to right.

\subsection{On the coverings of a nilmanifold}\label{subsec:cover_nilmanifold}
 
Now let $M$ be the compact manifold ${G\slash D}$. This $M$ is a so-called nilmanifold, i.e. a compact manifold with a transitive action by a connected nilpotent Lie group. Now take $\hat{M}$ as any regular covering of $M$ with fundamental group $\Theta$, we have $G$ acting transitively on $\hat{M}=G\slash \Theta$ by multiplication on the left. By Theorem 2 of \cite{Malcev}, this $\hat{M}$ is a topological product $M_1\times \R^{k}$ for some integer $k$ and a nilmanifold $M_1$. We can tell that $\hat{M}$ has two ends if $k=1$ and one end if $k\geqslant 2$. We can also be very specific about this number $k$. We call a canonical generator $d_i$ with $d_i^{l}\in\Theta$ for some $l\in\Z$ a generator of type I; otherwise, we call it a generator of type II. Then $k$ is exactly the number of type II generators.

\subsection{Proof of the main theroem}
Now we are ready to prove the main theorem.

\begin{proof}[Proof of Theorem \ref{thm:nilpotent_covering}]    
Asssme that $D$ is a torsion-free $s$-nilpotent group with canonical basis $\{d_1,\ldots,d_r\}$. The given group homomorphism $\rho:\Gamma\to D$ factors through $\Gamma^s$, and we still denote the homomorphism from $\Gamma^s\to D$ as $\rho$. We are led to consider the following lift of $\alpha_s$ to the proper holomorphic map $f$ between two $\rho$-coverings:
\begin{center}
\begin{tikzcd}
{X}_{\rho} \ar[d]\ar[r,"{f}"]& M_{\rho}\ar[d]\\
X\ar[r,"\alpha_s"]& A_s=:M
\end{tikzcd}    
\end{center}

Denote the subset of type II generators as $\{d_j\}_{j\in J}$. From the argument of \cref{subsec:cover_nilmanifold}, we know the number of set $|J|\geqslant 2$. For a fixed $d_j, j\in J$, consider the natural map $M_{\rho}={G\slash\Theta}\to (G_{j}\slash G_{j+1})\cong\R$ (notation as in \cref{subsec:canonical_coord} and \cref{subsec:cover_nilmanifold}), given by the parameter $t_j$ of $1$-parameter subgroup $x_j(t_j)$. This function is known to be real analytic, and it can be expressed as the logarithmic function which is known to be a polynomial in the nilpotent case (for the details, see
section 2 of \cite{Malcev}).

Denote the composition of $f$ and $t_j$ as $f_{j}$. Recall that in the construction of the higher Albanese map in \cref{subsec:higher_alb}, we need to first complexify the real Lie group and then take its holomorphic part, thus the $f_j$ above belongs to the real or imaginary part of a holomorphic function (which is a polynomial composed with a holomorphic function) and is thus pluriharmonic. 

Set $g=\sum\limits_{j\in J}f_j^2$, which is real analytic and plurisubharmonic. Now we show $g$ is exhaustive. For a constant $c>0$ and $g\in M_{\rho}$ with $\sum\limits_{j\in J}t_j(g)^2<c$, we know that the value of $t_j$ with $j\in J$ is bounded. Besides, the value of $t_j$ with $j\in K$ for $K$ the set of the index of type I, is also bounded from the definition of $M_{\rho}=G\slash\Theta$. We know that the set of elements in $M_{\rho}$ with bounded values in $t_j$ for any $1\leqslant j\leqslant r$ is compact by \cref{prop:bounded_second_coord}, so is its pre-image by the proper map $f$ in $X_{\rho}$, i.e. $\{x\in X_{\rho}: g(x)<c\}$ is relatively compact for every $c>0$.

The rest is the same as the argument in \cref{prop:abelian_cover}. We need to analyze the degeneracy locus of the Levi form of $g$, which ends the proof of the theorem.
\end{proof}

\section{On the number of ends of Malcev coverings}
In this section, we show that the Malcve covering of any compact K\"ahler manifold $X$ has at most one end. It is well-known that any K\"ahler group can have at most one end(\cite{Gromov},\cite{ABR}), and this can be seen as a ``nilpotent'' version of that one.

We first recall the torsion-free nilpotent completion of a given finitely generated group $\Gamma$ (see Appendix A of \cite{Campana}). With the notation as in \cref{subsec:higher_alb}, we denote $\Gamma_s'=\{g\in \Gamma\colon g^n\in \Gamma_s \text{\ for some integer\ }n\}$ and take $\Gamma_{\infty}'=\bigcap\limits_{s\geqslant 1}\Gamma_{s}'$. Denote the torsion-free nilpotent completion of $\Gamma$ as $\Gamma^{\mathrm{nilp}}_0:=\Gamma\slash \Gamma_{\infty}'$ and $\rho_0^{\mathrm{nilp}}:\Gamma\to \Gamma^{\mathrm{nilp}}_0$ the natural group homomorphism. Now if $\Gamma$ is the fundamental group of a compact K\"ahler manifold, the regular covering of $X$ associated with this $\rho_0^{\mathrm{nilp}}$, denoted as $X^{\mathrm{nilp}}$, is called the Malcev covering of $X$.
The argument below is inspired by the proof of theorem 0.1 in \cite{Claudon}. We also need the following stability of the image of the series of higher Albanese maps.

\begin{lemma}[Proposition 1 in \cite{Leroy}, and Lemma 3.1 in \cite{Claudon}]\label{lem:stable_image}
With the diagram \cref{diag:higher_alb}, there exists $k\geqslant 1$ such that the normalization of the image of $X$ via $\alpha_s$ is stable when $s\geqslant k$. That is to say that if we denote $Y_s=\mathrm{Image}(\alpha_s)$ and its normalization as $Z_s$, then these $Z_s$ are biholomorphic to each other if $s\geqslant k$. 
\end{lemma}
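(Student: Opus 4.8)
The plan is to prove \cref{lem:stable_image}, the stability of the normalized images $Z_s$ of the higher Albanese maps $\alpha_s\colon X\to A_s$. This is fundamentally a dimension-counting argument combined with the fact that each $A_{s+1}\to A_s$ is a fibration with affine (indeed, a torsor under a vector group) fibers, so that the normalized images $Z_s$ cannot strictly increase in dimension beyond the fixed upper bound $\dim X$.

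\begin{proof}[Proof sketch of \cref{lem:stable_image}]
First I would record the structure of the tower in \cref{diag:higher_alb}. The maps $\pi_{s}\colon A_{s+1}\to A_s$ are surjective holomorphic fiber bundles whose fibers are quotients of the vector group $\Gamma_{s+1}^s\otimes\C$-type additive groups arising from the graded piece $G_{s+1}/G_{s+2}$ of the Malcev Lie algebra; in particular each $\pi_s$ is a proper map with connected affine fibers, and the universal cover of every $A_s$ is a complex vector space. Since $\alpha_s=\pi_s\circ\alpha_{s+1}$, the image $Y_s=\pi_s(Y_{s+1})$, so the $Y_s$ form a tower of images under proper surjections, and likewise (after passing to normalizations, using that normalization is functorial for finite-type maps of reduced spaces and that $\pi_s$ restricts to a proper map $Z_{s+1}\to Z_s$) the $Z_s$ form a tower $\cdots\to Z_{s+1}\to Z_s\to\cdots$.

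Next I would argue that $\dim Z_s$ is nondecreasing in $s$ and bounded above by $\dim_\C X$. It is nondecreasing because $Z_{s+1}\to Z_s$ is surjective (as $\alpha_{s+1}$ dominates $\alpha_s$ through $\pi_s$), so $\dim Z_{s+1}\geqslant\dim Z_s$; it is bounded by $\dim X$ because each $Z_s$ is the normalization of the image of the compact $X$ under a holomorphic map, hence $\dim Z_s\leqslant\dim X$. A nondecreasing integer sequence bounded above is eventually constant, so there is a $k$ with $\dim Z_s=\dim Z_k=:d$ for all $s\geqslant k$. The real content is then to upgrade equality of dimensions to biholomorphism of the $Z_s$: once $\dim Z_{s+1}=\dim Z_s$, the surjective proper map $Z_{s+1}\to Z_s$ is generically finite; I would then show it is in fact birational and, combined with normality of both source and target, conclude it is a biholomorphism by Zariski's main theorem (a proper birational morphism between normal complex spaces with finite fibers is an isomorphism).

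The step I expect to be the main obstacle is precisely establishing that the generically finite proper map $Z_{s+1}\to Z_s$ is \emph{degree one}, i.e. birational rather than a genuine finite covering. The danger is that passing from $A_{s+1}$ to $A_s$ could fold several sheets of the image together. To rule this out I would use the description of the fibers of $\pi_s$ as torsors under a vector group together with the fact that $\alpha_{s+1}$ induces the natural surjection $\Gamma\to\Gamma^{s+1}$ on fundamental groups: the fiber direction of $\pi_s$ over the image corresponds to the graded piece $\Gamma_{s+1}^s\otimes\C$, and when $\dim Z_{s+1}=\dim Z_s$ this graded piece must already be ``seen'' by $\alpha_s$, forcing the lift $Z_{s+1}\to Z_s$ to be injective on a dense open set. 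Equivalently, one tracks that the mixed Hodge structure on the relevant graded quotient of the Malcev Lie algebra stabilizes, which is the Hodge-theoretic heart of Hain's and Leroy's analysis; invoking \cite{Hain87} and \cite{Leroy} for the stabilization of these graded pieces is what pins down birationality and closes the argument.
\end{proof}
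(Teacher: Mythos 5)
The paper gives no proof of this lemma; it is imported verbatim from Leroy (Proposition~1) and Claudon (Lemma~3.1), so your proposal can only be measured against the standard argument there. Your skeleton --- the $Z_s$ form a tower, $\dim Z_s$ is nondecreasing and bounded by $\dim X$, hence eventually constant, and one must then upgrade the resulting generically finite proper surjection $Z_{s+1}\to Z_s$ to a biholomorphism --- is the right one up to the last step. But that last step, degree one, is precisely where your proposal has a genuine gap. The justification you offer (``the mixed Hodge structure on the relevant graded quotient of the Malcev Lie algebra stabilizes'') is not correct as stated: the Malcev Lie algebra of a K\"ahler group need not be finite-dimensional, the graded pieces and hence the fibers of $\pi_s$ need not stabilize at all --- it is only the \emph{image} of $X$ that stabilizes --- so no property of the target tower can force injectivity of $Y_{s+1}\to Y_s$. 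Knowing that the general fiber of $Y_{s+1}\to Y_s$ is a finite subset of a fiber of $\pi_s$ (a lattice quotient of a vector group) does not by itself make it a singleton. A smaller slip: $\pi_s$ is in general \emph{not} proper (its fiber is $\Lambda\backslash V$ with $\mathrm{rk}\,\Lambda<\dim_{\R}V$ as soon as the weight $-(s+1)$ piece has a nonzero $(p,p)$-part), though this is harmless because $Y_{s+1}\to Y_s$ is a map between compact spaces and is therefore proper anyway.

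The clean way to close the gap avoids Hodge theory entirely. Since $\alpha_s=\pi_s\circ\alpha_{s+1}$, the analytic equivalence relations $R_s:=(\alpha_s\times\alpha_s)^{-1}(\Delta_{Y_s})\subseteq X\times X$ form a decreasing chain of closed analytic subsets of the compact space $X\times X$, hence are stationary: $R_s=R_k$ for all $s\geqslant k$. Stationarity gives injectivity of $Y_{s+1}\to Y_s$ for $s\geqslant k$ directly: if $y_1,y_2\in Y_{s+1}$ have the same image in $Y_s$, choose $x_i\in\alpha_{s+1}^{-1}(y_i)$; then $(x_1,x_2)\in R_s=R_{s+1}$, so $y_1=\alpha_{s+1}(x_1)=\alpha_{s+1}(x_2)=y_2$. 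Thus $Y_{s+1}\to Y_s$ is a holomorphic bijection of compact spaces, the induced finite surjection $Z_{s+1}\to Z_s$ has degree one, and the normality argument you correctly invoke (a finite bijective map onto a normal space is an isomorphism) finishes the proof. Your dimension count alone only recovers the stabilization of $\dim Z_s$, not of $Z_s$ itself; the chain condition on $R_s$ is the missing ingredient.
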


\begin{theorem}\label{thm:one_end_malcev}
Let $X$ be any compact K\"ahler manifold with fundamental group $\Gamma$, then its Malcev covering has at most one end.
\end{theorem}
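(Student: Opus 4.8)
The plan is to reduce the statement to a purely group-theoretic fact about the Galois group of the Malcev covering, and then to feed in the parity of the first Betti number as the decisive K\"ahler input. Since $X$ is compact and $X^{\mathrm{nilp}}\to X$ is a regular covering with deck group $\Gamma_0^{\mathrm{nilp}}$, the Milnor--\v{S}varc lemma shows that $X^{\mathrm{nilp}}$ is quasi-isometric to $\Gamma_0^{\mathrm{nilp}}$ equipped with a word metric. As the number of ends is a quasi-isometry invariant, it suffices to prove that the finitely generated torsion-free nilpotent group $\Gamma_0^{\mathrm{nilp}}=\Gamma\slash\Gamma_\infty'$ has at most one end.

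Because $\Gamma_0^{\mathrm{nilp}}$ is a finitely generated nilpotent group, it has polynomial growth and cannot split nontrivially over a finite subgroup, so by Stallings' structure theorem it can never have infinitely many ends. Thus $\Gamma_0^{\mathrm{nilp}}$ has $0$, $1$, or $2$ ends, and the entire point is to exclude two ends. A finitely generated group has exactly two ends precisely when it is virtually $\Z$, and since $\Gamma_0^{\mathrm{nilp}}$ is torsion-free this means $\Gamma_0^{\mathrm{nilp}}\cong\Z$. Moreover, for a finitely generated nilpotent group any generating set of the abelianization lifts to a generating set of the whole group, so $\Gamma_0^{\mathrm{nilp}}\cong\Z$ would force its maximal torsion-free abelian quotient to have rank exactly one.

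Here the higher Albanese picture enters. The maximal torsion-free abelian quotient of $\Gamma_0^{\mathrm{nilp}}$ coincides with $\Gamma^1=\pi_1(A_1)=\pi_1(\mathrm{Alb}(X))$: indeed $\Gamma_\infty'\subseteq\Gamma_2'$, and the natural map $\Gamma_0^{\mathrm{nilp}}\to\Gamma^1=\Gamma\slash\Gamma_2'$ realizes precisely the passage to the maximal torsion-free abelian quotient. Hence this quotient is $H_1(X;\Z)\slash\mathrm{Tor}\cong\Z^{b_1(X)}$, and the Hodge decomposition on the compact K\"ahler manifold $X$ forces $b_1(X)$ to be even. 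Consequently its rank is never equal to $1$: it is either $0$, in which case $\Gamma_0^{\mathrm{nilp}}$ is trivial and the covering is compact, or an even integer $\geqslant 2$. This contradicts the two-ended case and completes the proof.

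The main obstacle I anticipate is not any single deep step but the clean assembly of the two reductions: first, verifying that the number of topological ends of $X^{\mathrm{nilp}}$ genuinely agrees with the number of ends of the deck group $\Gamma_0^{\mathrm{nilp}}$, which rests on the cocompactness of the deck action over the compact base $X$; and second, pinning down that for a finitely generated torsion-free nilpotent group two-endedness is equivalent to a rank-one torsion-free abelianization. Once these are in place, the evenness of $b_1(X)$ supplied by Hodge theory is exactly the rigidity that rules out $\Z$, in close analogy with the classical fact that an infinite K\"ahler group has a single end.
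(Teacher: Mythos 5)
There is a genuine gap, and it sits exactly where the real difficulty of the theorem lies. Your argument rests on the assertion that $\Gamma_0^{\mathrm{nilp}}=\Gamma\slash\Gamma_\infty'$ is a finitely generated nilpotent group. It is not: quotienting by the \emph{intersection} $\Gamma_\infty'=\bigcap_s\Gamma_s'$ only makes the quotient \emph{residually} torsion-free nilpotent, and such a group need not be nilpotent or even virtually solvable. The paper's own proof exhibits the standard counterexample: for $Z$ a Riemann surface of genus $\geqslant 2$, Baumslag's theorem gives $\pi_1(Z)_0^{\mathrm{nilp}}=\pi_1(Z)$, a hyperbolic surface group; likewise a free group $F_2$ satisfies $(F_2)_\infty'=\{1\}$. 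Consequently the step ``polynomial growth, hence no splitting over a finite subgroup, hence not infinitely many ends'' has no foundation: residually torsion-free nilpotent groups can perfectly well be nonabelian free, i.e.\ have infinitely many ends. Ruling out infinitely many ends is precisely the substantive content of the theorem, and it genuinely requires the K\"ahler geometry. The paper does this by stabilizing the images of the higher Albanese maps (\cref{lem:stable_image}) and showing that $X^{\mathrm{nilp}}$ admits a proper surjection with connected fibers onto either the universal cover of a Riemann surface (Case 1) or, after Stein factorization, a Stein space of dimension $\geqslant 2$ (Case 2), both of which have one end.

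The parts of your proposal that do work are worth recording. The Milnor--\v{S}varc reduction is sound: the deck group acts cocompactly and properly on $X^{\mathrm{nilp}}$, so the number of ends of the covering equals the number of ends of $\Gamma_0^{\mathrm{nilp}}$. And your exclusion of the two-ended case is correct and does not even need nilpotency: a torsion-free virtually-$\Z$ group is $\Z$ (and $\Gamma_0^{\mathrm{nilp}}$ is torsion-free, being residually torsion-free nilpotent), its maximal torsion-free abelian quotient would then have rank $1$, whereas that quotient coincides with $\Z^{b_1(X)}$ and $b_1(X)$ is even by Hodge theory. This cleanly handles one of the three possibilities, but the dichotomy ``one end versus infinitely many'' remains open in your argument, so the proof as written does not establish the theorem.
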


\begin{proof}
We only need to focus on the case that $\Gamma^{\mathrm{nilp}}_0$ is not finite and show that in this case $X^{\mathrm{nilp}}$ has one end. Considering the series of higher Albanese maps, by \cref{lem:stable_image}, we know that the image will be stable after finitely many steps. Denote this common image as $Y$ and its normalization as $Z$. We consider the following two cases.

\textit{Case 1}: the image of $\alpha_1$ has dimension $1$. The image $\alpha_1(X)$ is then smooth and $\alpha_1$ has connected fibers (Proposition 9.19, \cite{Ueno}). Then the smooth model $Z=\alpha_1(X)$, and by Theroem 2.2 of \cite{Campana} \footnote{This is also a result of 1-formality of K\"ahler groups\cite{DGMS}.}, we have $\Gamma_0^{\mathrm{nilp}}\cong (\pi_1(Z))_0^{\mathrm{nilp}}$. Since $Z$ is a Riemann surface, a theorem of Baumslag (\cite{Baumslag}) tells that $\pi_1(Z)_0^{\mathrm{nilp}}=\pi_1(Z)$, i.e. the Malcev covering of $Z$ is exactly the universal covering $\widetilde{Z}$ of $Z$. By the uniformization theorem, $\widetilde{Z}$ has exactly one end (for we only consider the case with infinite fundamental groups). Now consider the lift of (normalization of) the albanese map

\begin{center}
\begin{tikzcd}
X_{0}^{\mathrm{nilp}} \ar[d,swap] \ar[r,"\alpha^{\mathrm{nilp}}"]& \widetilde{Z}=Z_{0}^{\mathrm{nilp}}\ar[d]\\
X\ar[r,"\alpha"] & Z
\end{tikzcd}    
\end{center}
which is a proper surjection onto $\widetilde{Z}$ with connected fibers, thus $X_0^{\mathrm{nilp}}$ has exactly one end.

\textit{Case 2:} if the image of $\alpha_1$ is bigger than $1$, then the dimension of stable image $Y$ is $\geqslant 2$. Denote the normalization of $Y$ as $Z$ and use the same notation $\alpha_s:X\to Z$ for the normalization of $\alpha_s:X\to Y_s\cong Y$. Now we have

\begin{center}
\begin{tikzcd}
X^{\mathrm{nilp}} \ar[d,swap] \ar[r,"\alpha^{\mathrm{nilp}}"]& \hat{Z}\ar[d]\ar[r]&\hat{Y}\ar[d]\ar[r,hookrightarrow]& \widetilde{A_s(X)} \ar[d] \\
X\ar[r,"\alpha_s"] & Z \ar[r,"\beta"]& Y \ar[r,hookrightarrow] & A_s(X)
\end{tikzcd}    
\end{center}
where
\begin{itemize}
    \item the map $\beta: Z\to Y$ is a (finite) normalization of $Y$ and thus does not change the fundamental group,
    \item $\hat{Z}=Z\times_{Y}\hat{Y}$ and $X^{\mathrm{nilp}}_0=X\times_{Z}\hat{Z}$,
    \item the two coverings $\hat{Y},\hat{Z}$ of $Y, Z$ respectively are associated with the surjection $\rho_s:\pi_1(Y)\cong \pi_1(Z)\to \Gamma^s=(\Gamma\slash\Gamma_{s+1})\slash\mathrm{Tor}$ (notation as in \cref{subsec:higher_alb}),
    \item $\widetilde{A_s(X)}$ is the universal covering of $A_s(X)$, which is a complex vector space.
\end{itemize} 
Due to the subjectivity of $\rho_s$, the $\hat{Y}$ embedded in a complex vector space $\widetilde{A_s(X)}$ is closed and connected, thus Stein. Then $\hat{Z}$, which admits a finite ramified covering over $\hat{Y}$, is also Stein. Notice that the proper morphism $\alpha^{\mathrm{nilp}}$ might have unconnected fibers. In this case, we take its Stein factorization, say $X^{\mathrm{nilp}}\xrightarrow{s}S\xrightarrow{t}\hat{Z}$ such that $t$ is finite and $s$ has connected fibers. Then $S$, as a finite covering over a Stein manifold $\hat{Z}$, is itself connected and Stein. Since $\mathrm{dim}(S)\geqslant 2$, we know that $S$ has one end (see Corollary 4.10 of \cite{BuanicuaStuanuac}). Then $X^{\mathrm{nilp}}$ has one end since $s: X^{\mathrm{nilp}}\to S$ is a proper surjection with connected fibers.
\end{proof}

\begin{remark}\label{rmk:compare_known_results}
The combination of \cref{thm:one_end_malcev} and \cref{thm:nilpotent_covering} tells us that the Malcev covering of any compact K\"ahler surface is holomorphically convex. This is certainly not new and holds in any dimension (i.e. not necessary for surfaces). The proof was given by Katzarkov in the projective case (\cite{Katzarkov97}) and by Leroy in the K\"ahler case (\cite{LeroyThesis} and \cite{Claudon}). Thus the main theroem is new only for intermediate coverings over compact K\"ahler surfaces.  
\end{remark}

\section*{Acknowledgments}
I would like to thank Professor Mohan Ramachandran for communicating the proof of \cref{prop:abelian_cover} to me. The author is partially supported by the China Postdoctoral Science Foundation 2023M743396.

\bibliographystyle{ssmfalpha} 
\bibliography{main}

\end{document}